\documentclass[DIV=13]{scrartcl}
\usepackage{amsmath,amsthm,upref, amssymb}
\usepackage[shortlabels]{enumitem}
\usepackage{mathtools, bbm}
\usepackage{microtype}
\usepackage{xcolor}
\usepackage[sortcites=true,giveninits=true,maxbibnames=10, backend=biber]{biblatex}
\AtEveryBibitem{\clearfield{issn}\clearfield{doi}\clearfield{urldate}}
\AtEveryCitekey{\clearfield{issn}\clearfield{doi}\clearfield{urldate}}
\renewrobustcmd*{\bibinitdelim}{\,}
\usepackage[unicode]{hyperref}
\hypersetup{
    colorlinks=true,
    linkcolor=blue,
    citecolor=red,
    filecolor=magenta,      
    urlcolor=cyan,
    pdftitle={ISS and integral ISS},
    linktocpage=true,
}
\DeclareSourcemap{
  \maps[datatype=bibtex]{
    \map[overwrite]{
      \step[fieldsource=shortjournal,fieldtarget=journaltitle]
    }
  }  
}
\theoremstyle{definition}
\newtheorem{definition}{Definition}[section]
\newtheorem{example}[definition]{Example}

\newtheorem{remarks}[definition]{Remarks}
\newtheorem{corollary}[definition]{Corollary}
\theoremstyle{plain}
\newtheorem{theorem}[definition]{Theorem}
\newtheorem{lemma}[definition]{Lemma}
\newtheorem{proposition}[definition]{Proposition}
\newcommand{\C}{\mathbb{C}}
\newcommand{\R}{\mathbb{R}}
\newcommand{\N}{\mathbb{N}}

\newcommand{\Ll}{\mathrm{L}}
\newcommand{\Cc}{\mathrm{C}}
\newcommand{\LL}{\mathcal{L}}
\newcommand{\Ee}{\mathrm{E}}
\newcommand{\e}{\mathrm{e}}
\newcommand{\dd}{\,\mathrm{d}}
\newcommand{\ii}{\mathrm{i}}
\newcommand{\Abs}[1]{\!\left| #1 \right|}
\newcommand{\abs}[1]{| #1 |}
\newcommand{\Norm}[1]{\!\left\lVert #1 \right\rVert}
\newcommand{\norm}[1]{\lVert #1 \rVert}
\newcommand{\Intnorm}[1]{\biggl\lVert #1 \biggr\rVert}

\newcommand{\Spr}[2]{\left\langle #1 , #2 \right\rangle}
\newcommand{\Tt}{T(t)}
\newcommand{\Ttt}{$(\Tt)_{t\ge 0}$}
\renewcommand{\epsilon}{\varepsilon}
\renewcommand{\phi}{\varphi}

\DeclareMathOperator{\vecspan}{span}

\newcommand{\ball}{\mathbf{B}}
\newcommand{\czero}{$\textup{C}_0$}

\newcommand{\calK}{\mathcal{K}}

\newcommand{\argument}{\mathord{\,\cdot\,}} 

\newcommand{\resSet}{\rho}
\newcommand{\Res}{\mathcal{R}} 
\newcommand{\dom}[1]{\operatorname{dom}\left(#1\right)} 
\DeclareMathOperator{\one}{{\mathbbm{1}}} 
\newcommand\restrict[1]{\raisebox{-.5ex}{$|$}_{#1}} 
\newcommand{\sun}{\odot} 
\newcommand{\mirrorR }{ \text{\reflectbox{\( \mathbf R \)}}} 
\DeclareMathOperator*{\esssup}{ess\,sup}

\setlist{itemsep=0.2ex, parsep=0.4ex}
\date{September 30, 2026}
\author{Sahiba Arora\thanks{Institut für Analysis, Leibniz Universität Hannover, Welfengarten 1, 30167 Hannover, Germany. E-mail: \texttt{sahiba.arora@math.uni-hannover.de}}\ , Philip Preußler\thanks{Mathematics of Systems Theory, Department of Applied Mathematics, University of Twente, P.\,O.\ Box 217, 7500~AE  Enschede, The Netherlands. E-mail: \texttt{p.n.preusler@utwente.nl}} \ and Felix L.\ Schwenninger\thanks{Mathematics of Systems Theory, Department of Applied Mathematics, University of Twente, P.\,O.\ Box 217, 7500~AE  Enschede, The Netherlands. E-mail: \texttt{f.l.schwenninger@utwente.nl}}}
\title{ISS and integral~ISS coincide for linear systems with bounded inputs}
\begin{document}
\maketitle
\begin{abstract}
    \noindent\textbf{Abstract.}
    We show that input-to-state stability (ISS) and integral input-to-state stability (integral~ISS)
    with respect to the space of essentially bounded functions are equivalent properties for linear systems. The proof combines recent results on Orlicz-admissibility and a duality result due to the authors. As a direct consequence, we show that, given an $L^\infty$-admissible control operator, the mild solutions of the linear system are continuous, resolving a question by G.\ Weiss.
\end{abstract}\vspace{3ex}
\noindent\textbf{Mathematics Subject Classification (2020):}
Primary 93C25; Secondary 93D09, 93C05, 47D06.\\
\vspace{-2ex}\\
\noindent\textbf{Keywords:} input-to-state stability, integral input-to-state stability, admissibility, continuity of mild solutions, infinite-dimensional systems
\section{Introduction}

\emph{Input-to-state stability (ISS)} is a key concept to jointly study internal stability and robustness to external forcing terms for large classes of systems, given by a control/initial-state-to-state relation, 
\begin{equation*}
 u,x(t_{0})\mapsto x(T), 
\end{equation*}
where $u\colon[t_{0},T]\to U$ and $x\colon[t_{0},T]\to X$ are respectively $U$-valued and $X$-valued signals in time $t\in[t_{0},T]$. Introduced by Sontag \cite{sontagSmoothStabilization1989}, it has flourished as its own field within control theory, both due to its theoretical strength and applicability in terms of practical assessment of the property, e.g.\ Lyapunov type theorems. Most importantly, it supplies a framework that allows to bridge formerly independent properties  as Lyapunov stability (of dynamical systems) and robustness of external inputs or disturbances. Especially for nonlinear systems, such a unified treatment is far from trivial; see \cite{sontagISSBasicConcepts2008} for an overview.

Yet, the history of ISS and its variants \cite{sontagCommentsIntegralVariants1998}, was initially limited to finite-dimensional systems,   
\begin{equation}
    \dot{x}(t)=f(x(t),u(t)), \qquad x(0)=x_{0};
\end{equation}
with $X$ and $U$ finite-dimensional and functions $f\colon X\times U\to X$  chosen such that the system is well posed
and every initial state and admissible input give rise to a
unique global solution (\emph{forward completeness}). In this situation, ISS is defined by saying that functions $\beta\in\mathcal{KL}$ and $\gamma\in \mathcal{K}$ in the common Lyapunov classes exist such that 
\begin{equation}\label{eq:iss}\tag{ISS}
    \|x(T)\|_X\leq \beta(\|x_{0}\|_X,T)+\gamma(\esssup_{t\in[0,T]}\|u(t)\|_U),
\end{equation}
for all times $T>0$,  initial states $x_{0}$ and inputs $u$. The \emph{linear}, finite-dimensional subcase turns out to be rather elementary, by the variation-of-constants formula, as ISS, with respect to all $x_{0}\in X = \R^d$ and continuous, regulated, or (essentially) bounded $u$, readily reduces to uniform asymptotic stability of the internal dynamics, or, in other words, that the matrix governing the state dynamics is Hurwitz. In this linear case, $\beta(s,t)=Ms\mathrm{e}^{-t\omega}$, and $\gamma(r)=Cr$ with some  $C,M,\omega>0$ can be chosen.
 
A specific variant of ISS, which arose from studying bilinear equations, is \emph{integral input-to-state stability (integral~ISS)}, in which the supremum norm of $u$ in \eqref{eq:iss} is replaced by an ``integral-type term'', i.e.,
\begin{equation*}
    \label{eq:integraliss}\tag{integral~ISS}
    \|x(T)\|_X\leq \beta(\|x_{0}\|_X,T)+\gamma\left(\int_{0}^{T}\alpha\left(\|u(t)\|_U\right)\dd t\right),
\end{equation*}
for all $u \in \Ll^\infty([0,\infty);U)$ and with $\alpha\in\mathcal{K}$. From the internal stability characterization, it is clear that for linear, finite-dimensional systems, integral~ISS is also equivalent to ISS.

Somewhat surprisingly, this simplicity does not carry over to linear, \emph{infinite-dimensional} systems on Banach spaces $X$ and $U$, as given by
\begin{equation}
    \label{eq:control-system}
    \dot x (t) = Ax(t) + Bu(t),\quad t\ge 0, \qquad x(0)=x_{0}\in X; 
\end{equation}
where $A$ is supposed to generate a strongly continuous semigroup \Ttt{} on $X$
and $B\in\LL(U,X_{-1})$ for a Banach
space $U$. Fix $\lambda_0\in\rho(A)$. We write $X_1=\dom A$
equipped with the usual graph norm
and let $X_{-1}$ denote the completion of $X$ with respect to
$\norm{x}_{X_{-1}}:=\norm{\Res(\lambda_0,A)x}_X$.
The semigroup \Ttt{} extends to a strongly continuous
semigroup $(T_{-1}(t))_{t\geq0}$ on $X_{-1}$, whose generator
$A_{-1}$ has domain $X$.
The input $u\in\mathrm Z([0,t];U)$ belongs to a specified
function space of strongly measurable $U$-valued functions.
The reason for more general input operators\footnote{By convention, operators $B \in \LL(U,X_{-1})$ are called \emph{unbounded}, as they need not be bounded $U \to X$.} $B$  than bounded ones from $U$ to $X$ is to allow for more general control settings, such as \emph{boundary control}, where the input $u$ only enters via a spatial boundary (trace), see, e.g., \cite{lasiecka_control_vol1_2000,lasiecka_control_vol2_2000, staffans_well-posed_2005,tucsnak_observation_2009}.

While the variation-of-constants formula,
\begin{equation}\label{eq:mildsol}
 x(t)=T(t)x_{0}+\int_{0}^{t}T_{-1}(t-s)Bu(s)\,\mathrm{d}s,
\end{equation}
defines (mild) solutions $x\colon[0,\infty)\to X_{-1}$, for general unbounded $B$, it is not in general true that $x$ maps into $X$, which is required to even consider an ISS-type estimate. For this reason, the definition of (integral) ISS with respect to a function class $\mathrm Z$ for such infinite-dimensional systems described by $A$ and $B$ consists of both the condition that for all $x_{0}\in X$, $u\in \mathrm Z(0,t_{1};U)$ and all $t_{1} > 0$, the mild solution \eqref{eq:mildsol} gives an $X$-valued function $x$ and that Inequality \eqref{eq:iss}, respectively, Inequality \eqref{eq:integraliss}, holds, for suitable Lyapunov-class functions $\beta$, $\gamma$ (and $\alpha$); see Section~\ref{sec:main-results} below and \cite{jacobOnInputToStateStability2016}.  See also \cite{MironchenkoPrieur2020} for a recent survey on ISS in the context of infinite-dimensional systems; also, see the books \cite{mironchenkoISS2023,karafyllisISS_PDEs2019} for more on ISS.

 We note that the natural choices for $\mathrm Z$ are the ($U$-valued) continuous functions $\mathrm{C}$, the regulated functions $\mathrm{Reg}$ and the essentially bounded functions $\mathrm{L}^{\infty}$. 
 
 Since the early contributions \cite{mazencStrictLyapunov2011,DashkovskyMironchenko}, ISS and integral~ISS have been studied for various classes of infinite-dimensional systems. Mironchenko and Wirth \cite{MironchenkoWirth2018} established characterizations of ISS for a class of infinite-dimensional control systems and identified limitations of such extensions. For bilinear systems with bounded input operators, Mironchenko and Ito \cite{MironchenkoItoBilinear2016} proved that uniform global asymptotic stability is equivalent to integral~ISS and developed Lyapunov methods for ISS and integral~ISS of nonlinear parabolic equations and small-gain criteria for their interconnections \cite{MironchenkoItoParabolic2015}. Zheng and Zhu \cite{ZhengZhu2020} established ISS estimates for a class of one-dimensional nonlinear parabolic equations with boundary disturbances and nonlinear boundary conditions, using maximum estimates and Lyapunov methods. Guiver, Logemann and Opmeer \cite{GuiverLogemannOpmeer2019} obtained sufficient conditions for (incremental) ISS of infinite-dimensional Lur'e systems.

For the linear systems considered here, the underlying question is whether $\mathrm Z$-ISS is equivalent to $\mathrm Z$-integral~ISS; see \cite{Jacob_2018}. As mentioned above, this equivalence is elementary in finite dimensions.

 First note that, for linear systems, it is easy to show that $\mathrm Z$-integral~ISS always implies $\mathrm Z$-ISS, as the latter is equivalent to uniform asymptotic stability together with $\mathrm Z$-admissibility, \cite{Jacob_2018}. At the same time, it was shown that also $\mathrm Z$-integral~ISS, with $\mathrm Z$ being either $\mathrm{C}$, $\mathrm{Reg}$ or $\mathrm{L}^{\infty}$,  can be characterized by uniform asymptotic stability together with admissibility with respect to a weaker norm than the supremum norm -- more precisely, by admissibility with respect to an Orlicz-space norm. With this result at hand, several results on equivalence of ISS and integral~ISS have been shown in the past ten years, all making extra assumptions on either the $A$ or the $B$ operators; see \cite{Jacob_2018,jacobContinuitySolutionsParabolic2019}. Recently, structural results, only making extra assumptions on the Banach spaces $X$ and $U$ (and their geometry), were shown in \cite{preusslerImplicationsOfStructured2026}. In particular, it was shown that ISS and integral~ISS are equivalent for all of the above choices of $\mathrm Z$ if $X$ is a reflexive space. The subtlety of the question and these results becomes apparent from the fact that the equivalence fails for $\mathrm Z=\mathrm{C}$ and $\mathrm Z=\mathrm{Reg}$ in general. Indeed, as first shown by a classical counterexample due to Kato in \cite{JacobSchwenningerWintermayr2022}, $\mathrm C$-ISS does not imply $\mathrm C$-integral~ISS.

\subsection{Main results}
    \label{sec:main-results}

Let $A$ generate a \czero-semigroup \Ttt{} on a Banach space $X$ and let $B\in \LL(U,X_{-1})$ for some Banach space $U$. 
Let $\mathrm Z$ be a placeholder for $\Ll^\infty$ or $\Ee_F$ for some Young function $F$ (see Section~\ref{sec:admissibility} for the definition).
We say that $B$ is \emph{$\mathrm{Z}$-admissible} if there is $C > 0$ such that
\[
    \Intnorm{\int_0^1 T_{-1}(1-s) Bu(s) \dd s}_{X} \leq C \norm{u}_{\mathrm{Z}([0,1];U)}
    \qquad
    \text{for all }
    u \in \mathrm{Z}([0,1];U).
\]

In what follows, as is customary when discussing comparison function spaces, we denote by $\calK$ the set of all continuous and strictly increasing
functions $\gamma\colon\R_+\to\R_+$ satisfying $\gamma(0)=0$, and by
$\calK_\infty$ the subset of all unbounded functions in $\calK$.
We write $\mathcal{KL}$ for the continuous functions
$\beta\colon\R_+\times\R_+\to\R_+$ such that
$\beta(\argument,t)\in\calK$ for every $t\geq0$ and
$\beta(s,\argument)$ is decreasing and converges to zero
as $t\to\infty$ for every $s\geq0$.

The system $(A,B)$ is called \emph{input-to-state stable with respect to $\mathrm Z$} (for short, $\mathrm Z$-ISS) if every initial state and input give rise to an $X$-valued
mild solution and there exist $M,\omega>0$ and $\theta\in \calK_\infty$ such that the mild solution  satisfies
\[
    \norm{x(t)}_X  \le M e^{-\omega t}\norm{x_0}_X + \theta( \norm{u}_{\mathrm Z([0,t];U)})
\]
for all $t\ge 0$, $x_0\in X$, and $u\in \mathrm Z([0,t];U)$.
The linear system $(A,B)$ is called
\emph{integral input-to-state stable with respect to
$\Ll^\infty$} (for short, $\Ll^\infty$-integral~ISS) if every initial
state and essentially bounded input give rise to an
$X$-valued mild solution and there exist $M$, $\omega>0$,
$\theta\in\calK_\infty$, and $\mu\in\calK$ so that
\[
    \norm{x(t)}_X
    \leq Me^{-\omega t}\norm{x_0}_X
    +\theta\left(
        \int_0^t\mu\left(\norm{u(s)}_U\right)\dd s
    \right)
\]
for all $t\geq0$, $x_0\in X$, and
$u\in\Ll^\infty([0,t];U)$.
A related property of integral-to-integral ISS has recently been studied in \cite{AroraMironchenko2026}.
The following is the main result of this article and it settles \cite[Open Problem~3.22]{MironchenkoPrieur2020} for linear infinite-dimensional systems. Its proof and of the corollaries below are given in Section~\ref{sec:proofs}.

\begin{theorem}
    \label{thm:main}
    Let $X$ and $U$ be Banach spaces, let $A$ be the generator of a \czero-semigroup \Ttt{} on $X$,
    and let $B \in \LL(U,X_{-1})$ be arbitrary. Consider the following conditions:
    \begin{enumerate}[\upshape (i)]
        \item \label{thm:main:itm:infty-admissibility}
        $B$ is $\Ll^\infty$-admissible;
        \item \label{thm:main:itm:orlicz-admissibility}
        $B$ is $\Ee_F$-admissible for some Young function $F$;
        \item \label{thm:main:itm:iss}
        The system $(A,B)$ is $\Ll^\infty$-ISS;
        \item \label{thm:main:itm:iiss}
        The system $(A,B)$ is $\Ll^\infty$-integral~ISS.
    \end{enumerate}
    We always have \ref{thm:main:itm:iiss}$\Rightarrow$\ref{thm:main:itm:iss}$\Rightarrow$\ref{thm:main:itm:infty-admissibility}$\Rightarrow$\ref{thm:main:itm:orlicz-admissibility}.
    In addition, if \Ttt{} is exponentially stable, then~\ref{thm:main:itm:infty-admissibility}--\ref{thm:main:itm:iiss} are even equivalent.
\end{theorem}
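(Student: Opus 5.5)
The easy implications come first. For (iv)$\Rightarrow$(iii) I use that in the linear setting the integral~ISS estimate, applied with $x_0=0$ and $u$ replaced by $ru$, gives $\norm{x(t)}\le\theta(t\mu(r\norm{u}_\infty))$. This bounds the input-to-state map uniformly on bounded sets of inputs on $[0,t]$, and hence, by linearity, in operator norm. With $x_0=0$ and $t=1$, linearity of $u\mapsto\int_0^1T_{-1}(1-s)Bu(s)\dd s$ then yields $\Ll^\infty$-admissibility, so (iv)$\Rightarrow$(i). Taking $u=0$ gives exponential stability, and the standard argument of \cite{Jacob_2018} (admissibility plus exponential stability gives a uniform-in-$t$ bound by splitting $[0,t]$ into unit intervals and summing a geometric series) yields (iii). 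The implication (iii)$\Rightarrow$(i) is the same restriction to $t=1$, $x_0=0$, after rescaling $u$ by linearity so that $\theta$ can be replaced by a linear bound. For (i)$\Rightarrow$(ii) I plan to invoke the recent Orlicz-admissibility result mentioned in the introduction. Nothing is needed if the intended statement is that $\Ll^\infty$-admissibility forces $\Ee_F$-admissibility for some Young function $F$. If that is not directly available, I would derive it from the duality result of the authors, as described below.

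For the equivalence under exponential stability, the only nontrivial step is (ii)$\Rightarrow$(iv) (or really (i)$\Rightarrow$(ii), depending on what is available). Given $\Ee_F$-admissibility and exponential stability, the characterization in \cite{Jacob_2018} says that $(A,B)$ is $\Ll^\infty$-integral~ISS. Concretely, the bound $\norm{u}_{\Ee_F}\lesssim$ a $\calK$-function of $\int F(\norm{u})$ gives the integral term with $\mu=F$ on each unit interval, and exponential stability glues the intervals together. One must also check that the mild solution is $X$-valued: on $[0,1]$ this follows from admissibility, since $\Ll^\infty\subset\Ee_F$ on bounded intervals, and for larger times from the semigroup property. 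Then (iv)$\Rightarrow$(iii)$\Rightarrow$(i)$\Rightarrow$(ii)$\Rightarrow$(iv) closes the cycle.

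The main obstacle is (i)$\Rightarrow$(ii), that is, upgrading $\Ll^\infty$-admissibility to admissibility for a strictly weaker Orlicz norm. I would first pass to the dual. $\Ll^\infty$-admissibility of $B$ should correspond to the adjoint $B^*\in\LL(X_1^\sun,U^*)$ being an $\Ll^1$-admissible observation operator for the sun-dual semigroup, with values in $\Ll^1([0,1];U^*)$ or a suitable bidual; this is where the authors' duality result enters. The aim is then to show that the output map $x^*\mapsto B^*T^*(\argument)x^*$ takes the unit ball to a uniformly integrable subset of $\Ll^1$. By de la Vallée-Poussin, uniform integrability produces a Young function $F$ with the outputs bounded in the complementary Orlicz space $\Ll_{F^*}$, and dualizing back gives $\Ee_F$-admissibility. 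Uniform integrability I would try to extract from strong continuity: the orbits are continuous in $\Ll^1$, so $\int_E\norm{B^*T^*(s)x^*}\dd s$ should be small uniformly in $x^*$ when $\abs{E}$ is small, via a Baire/compactness argument. This uniformity is the delicate step, and I would rely on the Orlicz-admissibility results quoted in the abstract to supply it.
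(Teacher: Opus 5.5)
Your treatment of (iv)$\Rightarrow$(iii)$\Rightarrow$(i), and of (ii)$\Rightarrow$(iv) under exponential stability, agrees with the paper, which simply cites \cite[Proposition~2.10 and Theorem~3.1]{Jacob_2018}. Your heuristic with $\mu=F$ is not literally right. For $F(t)=\e^t-1-t$, inputs $a\one_{[0,\delta]}$ with $\delta F(a)$ held fixed have Luxemburg norm tending to $1$ as $a\to\infty$, so no $\calK$-bound of the kind you state exists. Since you defer to the citation there, this is harmless.

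The genuine gap is (i)$\Rightarrow$(ii), which is the new content of the theorem. Your architecture for it matches the paper: dualize, prove uniform integrability of the output set $\{B^\sun T^\sun(\argument)x^\sun : x^\sun\in\ball_{X^\sun}\}\subseteq\Ll^1([0,1];U')$, then apply de la Vall\'ee-Poussin. But the uniform integrability is the whole difficulty, and you have no argument for it. The results of \cite{preusslerImplicationsOfStructured2026} do not supply it. They give $\Ee_F$-admissibility only once uniform integrability is known (their Theorem~3.4, i.e.\ your de la Vall\'ee-Poussin step), or under extra weak compactness or geometric assumptions on $X$ and $U$.

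Your proposed mechanism also fails. A Baire argument on the closed sets $\{x^\sun : \sup_{\abs{E}\le 1/n}\int_E\norm{(\Psi x^\sun)(s)}\dd s\le\epsilon\}$, with $\Psi$ the output map, only gives a bound $2\epsilon/r_\epsilon$ on the unit ball, where the radius $r_\epsilon$ depends on $\epsilon$. Otherwise the same argument would show that the unit ball of $\Ll^1$ is uniformly integrable. More fundamentally, after dualizing you retain only $\Ll^1$-admissibility of $B^\sun$. By \cite[Theorem~3.1]{aroraAdmissibleOperatorsSundual2026} this encodes merely $\Cc$-admissibility of $B$, and the paper's counterexamples show that this is insufficient. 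For $X=c_0$, $A=\operatorname{diag}(-n+\ii n^2)$ and $b=(n)_n$, the dual observation operator $Cz=\sum_n nz_n$ is $\Ll^1$-admissible for a strongly continuous semigroup on $X^\sun=\ell^1$. Yet $\int_0^\epsilon\abs{CT(s)'e_k}\dd s=1-\e^{-k\epsilon}\to1$, so the output set is not uniformly integrable. The Cantor-measure example shows that even zero-class $\Cc$-admissibility does not help. So no argument using only dual $\Ll^1$-admissibility plus strong continuity can work.

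The missing idea is to use that $\Phi_1$ is bounded on all of $\Ll^\infty$, via the Grothendieck property of $\Ll^\infty([0,1])$, which $\Cc([0,1])$ lacks. The paper proceeds in four steps.
\begin{enumerate}
\item Fix $v\in\Ll^\infty([0,1];U)$ and a separable subspace $V\subseteq U$ containing $v(s)$ for a.e.\ $s$ (Pettis). The operator $\Phi_{1,v}\colon\Ll^\infty([0,1])\to X$, $f\mapsto\Phi_1(fv)$, maps into the separable closed subspace $\overline{\vecspan}\{T(t)\Res(\lambda,A_{-1})Bz : t\ge0,\ z\in V\}$. Since $\Ll^\infty([0,1])$ is Grothendieck, $\Phi_{1,v}$ is weakly compact.
\item By Gantmacher its adjoint is weakly compact. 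On $X^\sun$, $\Phi_1'$ is the reflected output map, so the adjoint sends $x^\sun$ to $\Spr{(\Phi_1'x^\sun)(\argument)}{v(\argument)}\in\Ll^1([0,1])$. Dunford--Pettis then makes this scalar family uniformly integrable over $\ball_{X^\sun}$.
\item A separate lemma lifts scalar uniform integrability for every $v$ to uniform integrability in $\Ll^1([0,1];U')$. If the latter failed, one would obtain disjoint sets $E_n$ and functions $f_n$ with $\int_{E_n}\norm{f_n(s)}\dd s\ge\epsilon$. Choosing norming $v_n$ in the unit ball of $\Cc([0,1];U)$ and gluing them into $v=\sum_n\one_{E_n}v_n$ gives a scalarization that is not uniformly integrable. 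This detour is needed because $\Ll^\infty([0,1];U)$ need not itself be Grothendieck.
\item Only now does \cite[Theorem~3.4]{preusslerImplicationsOfStructured2026} give $\Ee_F$-admissibility.
\end{enumerate}
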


For $1\leq p<\infty$, $\Ll^p$-admissibility of a control operator
implies continuity of the corresponding mild solutions as
$X$-valued functions \cite[Proposition~2.3]{weiss_admissibility_1989}. Whether the same remains true for $p=\infty$ is a long-standing problem posed by Weiss in \cite[Problem~2.4]{weiss_admissibility_1989}. Subsequent results established continuity under additional hypotheses, for instance, under certain assumptions on analytic semigroups \cite[Theorem~1]{jacobContinuitySolutionsParabolic2019}. Theorem~\ref{thm:main} gives an answer to the question without any additional hypothesis:

\begin{corollary}
    \label{cor:weiss}
    Let $X$ and $U$ be Banach spaces, let $A$ be the generator of a \czero-semigroup \Ttt{} on $X$. If $B\in \LL(U,X_{-1})$ is $\Ll^\infty$-admissible, then for each $x_0\in X$ and $u\in \Ll^\infty(\R_+; U)$, the mild solution $x$ given by~\eqref{eq:mildsol} of the control system~\eqref{eq:control-system} is a continuous function $x\colon [0,\infty) \to X$.
\end{corollary}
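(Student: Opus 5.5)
The plan is to reduce Corollary~\ref{cor:weiss} to Theorem~\ref{thm:main}. The key input is implication \ref{thm:main:itm:infty-admissibility}$\Rightarrow$\ref{thm:main:itm:orlicz-admissibility}, which holds without any stability assumption. Once $B$ is $\Ee_F$-admissible for some Young function $F$, continuity follows from the absolute continuity of the norm on $\Ee_F$, by mimicking Weiss' argument for $\Ll^p$, $p<\infty$. Since $t\mapsto T(t)x_0$ is continuous, it suffices to treat $x_0=0$. So the object to study is the input map
\[
\Phi_t u=\int_0^t T_{-1}(t-s)Bu(s)\dd s .
\]

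First, I would upgrade $\Ee_F$-admissibility on $[0,1]$ to all finite horizons, with a uniform bound
\[
\norm{\Phi_t u}_X\le C_\tau \norm{u}_{\Ee_F([0,t];U)}, \qquad 0\le t\le\tau .
\]
This is the standard argument: split $[0,\tau]$ into unit pieces and use the composition property
\[
\Phi_{t+s}u=T(s)\Phi_t u+\Phi_s\bigl(u(t+\cdot)\bigr),
\]
together with $\sup_{[0,\tau]}\norm{T(s)}<\infty$. For $t<1$, I would extend $u$ by zero on the left: zero input on $[0,1-t]$ shows $\Phi_t u=\Phi_1\tilde u$, where $\tilde u$ is the shifted input. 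One also uses that restriction to subintervals does not increase the Luxemburg norm.

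Next, fix $u\in\Ll^\infty([0,\tau];U)$ and $t_0\in[0,\tau)$, and let $h>0$. Write
\[
\Phi_{t_0+h}u-\Phi_{t_0}u=\bigl(T(h)-I\bigr)\Phi_{t_0}u+\Phi_h\bigl(u(t_0+\cdot)\bigr).
\]
The first term tends to $0$ by strong continuity. The second is bounded by $C_\tau\norm{u}_{\Ee_F([0,h])}$. This tends to $0$ because $u\in\Ll^\infty\subset\Ee_F$ on bounded intervals and, on a set of measure $h$, $\norm{\one_{[0,h]}}_{\Ee_F}\to0$. Explicitly, the Luxemburg norm of $\one_{[0,h]}\norm{u}_\infty$ equals $\norm{u}_\infty/F^{-1}(1/h)\to0$. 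Thus right-continuity holds.

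Left-continuity at $t_0>0$ is the delicate step. Writing $t_0=(t_0-h)+h$ gives
\[
\Phi_{t_0}u-\Phi_{t_0-h}u=\bigl(T(h)-I\bigr)\Phi_{t_0-h}u+\Phi_h\bigl(u(t_0-h+\cdot)\bigr).
\]
The last term is again $O\bigl(\norm{u}_\infty/F^{-1}(1/h)\bigr)$. For the first term, the set $\{\Phi_{s}u: s\in[0,t_0]\}$ is not a priori relatively compact, so I would argue differently. I would approximate $u$ in $\Ee_F([0,\tau];U)$ norm by step functions or $\Cc^1$ functions $u_n$. Such functions are dense in $\Ee_F$, and for these the trajectory is continuous in $X$ by the classical smooth-input theory. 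Uniform convergence $\Phi_t u_n\to\Phi_t u$ on $[0,\tau]$ then follows from the uniform bound in the first step. Hence $t\mapsto\Phi_t u$ is a uniform limit of continuous functions, which in fact yields continuity on both sides at once and makes the one-sided computation unnecessary.

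The main obstacle, which I would check carefully, is the density of simple or smooth functions in $\Ee_F$. This is exactly where the space $\Ee_F$, the closure of bounded functions with absolutely continuous norm, is used instead of $\Ll_F$ or $\Ll^\infty$, for which the approximation would fail. The other point to verify is that the uniform bound in the first step really is uniform in $t$.
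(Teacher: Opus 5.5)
Your proposal is correct and follows the paper's route. The paper also obtains $\Ee_F$-admissibility from the implication \ref{thm:main:itm:infty-admissibility}$\Rightarrow$\ref{thm:main:itm:orlicz-admissibility} of Theorem~\ref{thm:main}, and then simply cites \cite[Proposition~5]{jacobContinuitySolutionsParabolic2019} for continuity under $\Ee_F$-admissibility; your argument via a uniform bound on $[0,\tau]$ and density of step functions in $\Ee_F$ is a self-contained proof of that cited step. As a side remark, left-continuity is not actually delicate: your estimate $\norm{\Phi_h v}_X\le C\norm{v}_{\infty}/F^{-1}(1/h)$ is zero-class $\Ll^\infty$-admissibility, and anchoring the decomposition at a fixed $t_0-\delta$ rather than at $t_0-h$ already gives continuity from both sides without any approximation.
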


Admissible control operators also arise when the state is fed back as
the input. In the case $U=X$, this leads formally to the autonomous
equation $\dot x=A_{-1}x+Bx$. A natural question is whether the part of
$A_{-1}+B$ in $X$ generates a $C_0$-semigroup, so that the resulting
dynamics are well posed on $X$. The question is addressed by the bounded perturbation theorem
and the Desch--Schappacher perturbation theory
\cite[Sections~III.1 and~III.3.a]{engel_one-parameter_2000};
see also \cite{adlerOnPerturbations2014}. In particular, an affirmative answer is available when $B$ is $\Ll^p$-admissible, for $1\le p<\infty$. As another consequence of Theorem~\ref{thm:main}, we are able to
establish a generation result at the endpoint $p=\infty$:

\begin{corollary}
    \label{cor:perturbation}
    Let $A$ be the generator of a \czero-semigroup \Ttt{} on a Banach space $X$. If $B\in \LL(X,X_{-1})$ is $\Ll^\infty$-admissible, then the part of  $A_{-1}+B$ in $X$ generates a $C_0$-semigroup on $X$.
\end{corollary}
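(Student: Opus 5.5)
The statement to prove is Corollary~\ref{cor:perturbation}. The plan is to reduce it to the Desch--Schappacher perturbation theorem in its admissibility form \cite[Section~III.3.a]{engel_one-parameter_2000}. That theorem says: if $B\in\LL(X,X_{-1})$ and there are $t_0>0$ and $q<1$ such that for every $u\in \Cc([0,t_0];X)$ the integral $\int_0^{t_0}T_{-1}(t_0-s)Bu(s)\dd s$ lies in $X$ with
\[
    \Intnorm{\int_0^{t_0}T_{-1}(t_0-s)Bu(s)\dd s}_X\le q\,\norm{u}_\infty ,
\]
then the part of $A_{-1}+B$ in $X$ generates a \czero-semigroup. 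Two things therefore have to be established: the range condition for continuous (or at least $\Ll^\infty$) inputs, and the smallness constant $q<1$ for small $t_0$. The second is where $\Ll^\infty$ differs from $\Ll^p$, $p<\infty$. For $\Ll^p$ one gets $q\to 0$ for free from Hölder's inequality. For $\Ll^\infty$-admissibility alone, the norm of $u\mapsto\int_0^t T_{-1}(t-s)Bu(s)\dd s$ on $\Ll^\infty([0,t];X)$ need not tend to $0$ as $t\downarrow0$ a priori.

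This is where I would use Theorem~\ref{thm:main}, via (i)$\Rightarrow$(ii): $B$ is $\Ee_F$-admissible for some Young function $F$. Admissibility does not depend on the chosen time horizon, so for each $t\in(0,1]$ this gives a bound
\[
    \Intnorm{\int_0^t T_{-1}(t-s)Bu(s)\dd s}_X\le C\norm{u}_{\Ee_F([0,t];X)},
\]
with $C$ uniform in $t\le 1$. To see this, extend $u$ by zero to $[0,1]$ and shift it so that it lives on $[1-t,1]$. Then the integral over $[0,1]$ reduces to the one over $[1-t,1]$, which equals the integral above after the change of variables, and the Orlicz norm is unchanged by zero extension and translation. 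The Luxemburg norm of a function bounded by $\norm{u}_\infty$ on an interval of length $t$ satisfies $\norm{u}_{\Ee_F([0,t];X)}\le \norm{u}_\infty / F^{-1}(1/t)$. Since $F$ is a Young function, $F^{-1}(1/t)\to\infty$ as $t\to0$. Hence the $\Ll^\infty$-to-$X$ norm of the input map on $[0,t]$ is at most $C/F^{-1}(1/t)$, and choosing $t_0$ small gives $q<1$. I expect this step, getting a vanishing small-time constant, to be the main obstacle, and the Orlicz route is exactly what removes it.

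For the range condition, $\Ll^\infty$-admissibility already guarantees that the integral lies in $X$ for every $u\in\Ll^\infty\supset \Cc$. In addition, Corollary~\ref{cor:weiss} gives continuity of $t\mapsto \int_0^t T_{-1}(t-s)Bu(s)\dd s$ in $X$, which the Desch--Schappacher framework needs: the fixed-point iteration $x = T(\cdot)x_0 + \int_0^{\cdot} T_{-1}(\cdot-s)Bx(s)\dd s$ has to map $\Cc([0,t_0];X)$ into itself. With the contraction constant $q<1$, the Banach fixed-point theorem yields a unique continuous mild solution $S(t)x_0$ on $[0,t_0]$. One then extends to all $t\ge0$ by the semigroup property and checks strong continuity and identification of the generator as the part of $A_{-1}+B$ in $X$. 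This can be cited directly from \cite[Theorem~III.3.1 and Corollary~III.3.2]{engel_one-parameter_2000} once the hypotheses are verified, completing the proof.
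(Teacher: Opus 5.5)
Your proposal is correct and follows the paper's route. $\Ll^\infty$-admissibility gives $\Ee_F$-admissibility (Proposition~\ref{prop:linfty_to_ef}), hence zero-class $\Ll^\infty$-admissibility, and the Desch--Schappacher theorem with a constant $q<1$ then yields the generator; you check the zero-class step directly via $\norm{u}_{\Ee_F([0,t];X)}\le\norm{u}_\infty/F^{-1}(1/t)$, whereas the paper cites \cite{jacobContinuitySolutionsParabolic2019} in Remark~\ref{rem:linfty-to-orlicz}(b). Your appeal to Corollary~\ref{cor:weiss} is harmless but unnecessary: only continuous inputs enter the Desch--Schappacher theorem, and continuity of the state for such inputs is handled within the cited perturbation result.
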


Theorem~\ref{thm:main} also has consequences for the study of bilinear systems.

\begin{corollary}
    \label{cor:bilinear}
    Let $A$ generate a \czero-semigroup on a Banach space $X$
    and let $B\in\LL(X,X_{-1})$.
    If the linear system $(A,B)$ is $\Ll^\infty$-ISS,
    then the bilinear system
    \[
        \dot x(t)=Ax(t)+u(t)Bx(t),
        \qquad x(0)=x_0\in X,
    \]
    is $\Ll^\infty$-integral~ISS: for every $x_0\in X$
    and $u\in\Ll^\infty(\R_+;\C)$, its mild solution
    exists globally in $X$ and satisfies
    \eqref{eq:integraliss} with $U=\C$ and suitable
    comparison functions $\beta$, $\gamma$, $\alpha$
    independent of $x_0$ and $u$.
\end{corollary}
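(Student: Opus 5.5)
The plan is to reduce the claim to a known criterion for bilinear systems with unbounded control operators. That criterion states that the bilinear system is $\Ll^\infty$-integral~ISS provided \Ttt{} is exponentially stable and $B$ is admissible in a sense strong enough that the input-to-state map has norm tending to zero on short time intervals, i.e.\ \emph{zero-class} admissibility. Theorem~\ref{thm:main} supplies exactly this.

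\textbf{Step 1: exponential stability and Orlicz-admissibility.} By definition, $\Ll^\infty$-ISS of $(A,B)$ with $u=0$ gives $\norm{T(t)x_0}\le Me^{-\omega t}\norm{x_0}$, so \Ttt{} is exponentially stable. Theorem~\ref{thm:main} then yields a Young function $F$ such that $B$ is $\Ee_F$-admissible. By the usual shift and semigroup argument, the admissibility constant is uniform on $[0,t]$ for $t\le1$:
\[
  \Intnorm{\int_0^t T_{-1}(t-s)Bv(s)\dd s}_X\le C\norm{v}_{\Ee_F([0,t];X)} .
\]

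\textbf{Step 2: zero-class admissibility.} For $v\in\Ll^\infty([0,t];X)$, the Luxemburg norm satisfies
\[
  \norm{v}_{\Ee_F([0,t])}\le \norm{v}_\infty\,\norm{\one_{[0,t]}}_{\Ee_F}=\norm{v}_\infty/F^{-1}(1/t),
\]
and the right-hand side tends to $0$ as $t\to0$. Hence $\kappa(t):=\sup_{\norm{v}_\infty\le1}\norm{\int_0^tT_{-1}(t-s)Bv(s)\dd s}\to0$, so $B$ is zero-class $\Ll^\infty$-admissible. Corollary~\ref{cor:weiss} ensures that mild solutions of the linear system are continuous.

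\textbf{Step 3: well-posedness and the key estimate.} For $u\in\Ll^\infty(\R_+;\C)$ and $x\in\Cc([a,b];X)$ we have $\abs{u(s)}\,\norm{x(s)}\le \abs{u(s)}\,\norm{x}_\infty$ pointwise. Hence the fixed point map
\[
  \Phi(x)(t)=T(t-a)x(a)+\int_a^tT_{-1}(t-s)B\,u(s)x(s)\dd s
\]
is a contraction on $\Cc([a,b];X)$ as soon as $C\norm{u\one_{[a,b]}}_{\Ee_F}\le\frac12$. This yields local mild solutions, and iterating over consecutive intervals gives global existence. The quantitative bound comes from splitting $[0,t]$ into intervals $[t_{k},t_{k+1}]$ on which $\int_{t_k}^{t_{k+1}}F(\abs{u}/\delta)\dd s\le1$, for a fixed small $\delta$ with $2C\delta\le1$. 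The number $N$ of such intervals is at most $1+\int_0^tF(\abs{u(s)}/\delta)\dd s$ (together with a count of unit-length pieces, absorbed by exponential decay). On each piece the solution is controlled by the semigroup plus a geometric factor. Combined with exponential stability, via a variation-of-constants/Gronwall-type argument comparing with $T(t)$, I aim for
\[
  \norm{x(t)}\le M'e^{-\omega' t}\exp\Bigl(c\int_0^t\alpha(\abs{u(s)})\dd s\Bigr)\norm{x_0},\qquad \alpha(r)=F(r/\delta)+r .
\]

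\textbf{Step 4: from the exponential bound to \eqref{eq:integraliss}.} This follows Sontag's argument for bilinear systems, as adapted in the infinite-dimensional setting by Mironchenko--Ito and later works with admissible $B$. One uses homogeneity of the bilinear system in $x_0$ and the decay of the unforced semigroup. These show that the product of $\norm{x_0}$ and $\exp(c\int\alpha)$ can be redistributed into a $\mathcal{KL}$ term and a $\calK$ term, at the price of modifying $\alpha$ (e.g.\ to $\alpha/(1+\alpha)$-type saturations).

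\textbf{Main obstacle.} I expect the main obstacle to be Step~3, namely obtaining the estimate with the integral in the exponent rather than $t\norm{u}_\infty$. This requires care in bounding the number of subintervals by an integral of $F(\abs{u}/\delta)$ while still retaining the exponential decay of \Ttt{}. If a clean reference for Steps~3--4 under zero-class admissibility is available, I would instead invoke it directly after Step~2.
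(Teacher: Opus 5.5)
Your reduction is the same as the paper's. Theorem~\ref{thm:main} turns $\Ll^\infty$-ISS of $(A,B)$ into exponential stability plus $\Ee_F$-admissibility, i.e.\ $\Ll^\infty$-integral~ISS of the linear system. The paper then simply applies \cite[Theorem~2.9]{HosfeldJacobSchwenninger2022} with $F(x,u)=ux$, $B_1=B$, $B_2=0$, which is your fallback. The hypothesis that matters there is integral~ISS (Orlicz admissibility), not zero-class admissibility: a bound depending only on $b-a$ and $\norm{u}_\infty$ gives no integral term. Your Step~3 correctly uses $\Ee_F$, so Step~2 is a detour.

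As a self-contained argument, Step~3 has a genuine gap. Your greedy partition gives $\norm{x(t)}\le(2M_1)^N\norm{x_0}$ with $N\le 1+t+\int_0^tF(\abs{u}/\delta)\dd s$. That bound has already discarded the decay of \Ttt{}, so the factor $(2M_1)^t$ cannot be ``absorbed by exponential decay''. A pointwise Gronwall comparison is also unavailable, since $T_{-1}(t-s)B$ need not map $X$ boundedly into $X$.

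What is missing is a block argument that uses decay on long blocks:
\begin{enumerate}[(1)]
  \item Fix $\tau$ with $Me^{-\omega\tau}\le\frac14$, let $C_\tau$ be the $\Ee_F$-admissibility constant for intervals of length $\le\tau$, and take $\delta\le 1/(8MC_\tau)$.
  \item On a block $[a,a+\tau]$ with $\int_a^{a+\tau}F(\abs{u}/\delta)\dd s\le 1$ you have $\norm{u\one_{[a,a+\tau]}}_{\Ee_F}\le\delta$. Hence $\sup_{[a,a+\tau]}\norm{x}\le 2M\norm{x(a)}$, and therefore $\norm{x(a+\tau)}\le\frac14\norm{x(a)}+2MC_\tau\delta\norm{x(a)}\le\frac12\norm{x(a)}$.
  \item A block with $I_k\coloneqq\int F(\abs{u}/\delta)\dd s>1$ splits into at most $2I_k$ pieces, each costing a factor $2M$. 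There are at most $S\coloneqq\int_0^tF(\abs{u}/\delta)\dd s$ such blocks.
  \item Altogether, $\norm{x(t)}\le c_1e^{-\omega' t}e^{c_2S}\norm{x_0}$ with $\omega'=\ln 2/\tau$.
\end{enumerate}

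Step~4 then needs no saturation of $\alpha$. By Young's inequality,
\[
  c_1e^{-\omega't}e^{c_2S}r\le c_1e^{-\omega't}r+\tfrac{c_1}{2}e^{-2\omega't}r^2+\tfrac{c_1}{2}\bigl(e^{c_2S}-1\bigr)^2,
\]
which is \eqref{eq:integraliss} with $\alpha(r)=F(r/\delta)+r$.

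One small correction: in your fixed-point map, $T(t-a)x(a)$ must use the prescribed state at time $a$, not the value of the argument $x$; otherwise $\Phi$ is not a contraction.
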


More general versions of this result for system classes as in \cite{HosfeldJacobSchwenninger2022} can be shown analogously. E.g., consider the more general bilinear system class 
\[
    \dot x = Ax + B_1 F(x, u_1) + B_2 u_2,\quad x_0 \in X.
\]
Under the hypotheses on $F$ in
\cite[Section~2.1]{HosfeldJacobSchwenninger2022},
$\Ll^\infty$-integral~ISS follows if
 $(A,B)$ is
$\Ll^\infty$-ISS; compare also the system class in
\cite{HastirHosfeldSchwenningerWierzba2024}.
We refer to \cite{hosfeldISSbilinear2024} for more on ISS of bilinear systems.

\section{Lifting \texorpdfstring{{\boldmath$\Ll^\infty$}}{L^infty}-admissibility to Orlicz-admissibility}\label{sec:admissibility}

While $\Ll^\infty$-admissibility controls the response to bounded inputs, one can ask whether the input operator extends to a larger function space. A natural candidate is the \emph{Orlicz heart} $\Ee_F$ for an appropriate Young function $F$. This was posed as \cite[Question~24]{jacobContinuitySolutionsParabolic2019}. This question has been addressed for various classes of control systems, such as diagonal systems
\cite[Corollary~3.7 and Remark~3.9(3)]{JacobPartingtonPottRydheSchwenninger2026} and systems for which weak compactness or geometric assumptions on the state space are available \cite[Corollary~3.8 and Theorems~3.7 and~3.17]{preusslerImplicationsOfStructured2026}. In this section, we affirmatively answer the question without any additional assumptions on the control system.

Let $F\colon \R_+\to \R_+$ be a \emph{Young function}, i.e., $F$ is a convex, continuous, and increasing function such that $\lim_{t\to 0} t^{-1} F(t)=0$ and $\lim_{t\to \infty} t^{-1} F(t)=\infty$. Recall that for a Banach space $U$, the \emph{small Orlicz space} $\Ee_F([0,1];U)$ is defined as the completion of the bounded $U$-valued simple functions in the Luxemburg norm
\[
    \norm{u}_{\Ee_F([0,1];U)} \coloneqq \inf\left\{ \lambda>0 : \int_0^1 F\left( \lambda^{-1} \norm{u(s)}_U \right)\dd s\le 1\right\}.
\]

\begin{proposition}\label{prop:linfty_to_ef}
    Let $X$ and $U$ be Banach spaces and
    let $(T(t))_{t\ge 0}$ be a \czero-semigroup on $X$. If $B\in \LL(U,X_{-1})$ is $\Ll^\infty$-admissible, then $B$ is $\Ee_F$-admissible for some Young function $F$.
\end{proposition}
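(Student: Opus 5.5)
The plan is to reduce $\Ee_F$-admissibility to a uniform smallness property of the input map on bounded inputs supported on small sets, and then to derive that property from $\Ll^\infty$-admissibility via duality. Write $\Phi u := \int_0^1 T_{-1}(1-s)Bu(s)\dd s$. By $\Ll^\infty$-admissibility, $\Phi$ is a bounded operator $\Ll^\infty([0,1];U)\to X$. The characterisation I would use (of the type in \cite{Jacob_2018,preusslerImplicationsOfStructured2026}, and ultimately a de la Vallée-Poussin argument) is the following. $B$ is $\Ee_F$-admissible for some Young function $F$ if and only if
\[
    \eta(\delta) := \sup\Bigl\{ \norm{\Phi(\one_E u)}_X : \norm{u}_{\Ll^\infty}\le 1,\ E\subseteq[0,1] \text{ measurable},\ \abs{E}\le\delta \Bigr\} \longrightarrow 0
    \quad (\delta\to 0).
\]
The direction we need goes as follows. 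Given $\eta(\delta)\to 0$, decompose a simple function $u$ into dyadic level sets $\{2^{k}\le\norm{u}_U<2^{k+1}\}$. Each level set has measure at most $1/G(2^k)$ for a suitably chosen slowly growing Young function $G$, so $\norm{\Phi u}_X\le \sum_k 2^{k+1}\eta(1/G(2^k))$. One then picks $F$ growing fast enough that this sum is bounded by $C\norm{u}_{\Ee_F}$, and extends $\Phi$ by density to $\Ee_F$.

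The core step is to show $\eta(\delta)\to 0$ using only $\Ll^\infty$-admissibility. I would pass to the adjoint. For $x'\in X'$ and simple $u$,
\[
    \langle \Phi u, x'\rangle = \int_0^1 \langle u(s), B'T'(1-s)x'\rangle \dd s .
\]
Here $s\mapsto B'T'(1-s)x'$ is well defined for $x'$ in the domain of $A'$, a weak$^*$-dense set on which $T'$ acts nicely. The duality result of the authors should give the following: $\Ll^\infty$-admissibility forces $\Phi'$ to map $X'$ into the countably additive part, namely $\Ll^1$-type functions with values in $U'$ in a weak$^*$ sense, rather than into the full dual $(\Ll^\infty)'$. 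The estimate $\norm{\Phi' x'}_{\Ll^1}\le C\norm{x'}$ is then exactly the admissibility bound. Once this holds, $\eta(\delta)\to 0$ is equivalent to \emph{uniform integrability} of the family $\{\norm{B'T'(1-\argument)x'}_{U'} : \norm{x'}\le 1\}$ in $\Ll^1[0,1]$.

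The main obstacle is this uniform integrability: pointwise integrability for each $x'$ does not automatically give equi-integrability over the unit ball. I would argue by contradiction, assuming there are $\epsilon>0$, sets $E_n$ with $\abs{E_n}\to 0$, and unit vectors $u_n$ with $\norm{\Phi(\one_{E_n}u_n)}\ge\epsilon$. The semigroup structure then lets us rearrange. By the composition property $\Phi_{t+s}(u\diamond v)=T(t)\Phi_s u+\Phi_t v$ and strong continuity of $T$, we can shift and concatenate the bad inputs into disjoint time windows, passing to a subsequence with $\sum\abs{E_n}<\infty$ and the $E_n$ essentially disjoint. This produces a single bounded input $v=\sum_n \epsilon_n \one_{E_n}u_n$ with signs or phases $\epsilon_n$, and a Rosenthal/Kadec--Pełczyński disjointification argument shows that $\Phi$ would have to fix a copy of $\ell^\infty$. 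From this I would then try to deduce that the contributions fail to form a convergent, or even bounded, sum in $X$. This contradicts either the boundedness of $\Phi$ on $\Ll^\infty$, or the fact that $\Phi$ maps the $\Ll^\infty$ functions into $X$ rather than merely into $X_{-1}$, which is where the duality result enters: $\Phi'$ takes values in $\Ll^1$, so $\Phi$ is weak$^*$-to-weak continuous on bounded sets. I expect this contradiction step, that is, making precise how the duality result excludes a non-equi-integrable family, to be the delicate part. Everything else, namely the Orlicz construction from $\eta$ and the adjoint formula, is standard.
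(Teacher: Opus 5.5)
\paragraph{Where the argument breaks.} Your outer reduction is sound. Since $X^\sun$ is norming for $X$, $\eta(\delta)\to0$ is equivalent to uniform integrability of $\Phi'(\ball_{X^\sun})\subseteq\Ll^1([0,1];U')$, and your level-set construction of $F$ plays the role of \cite[Theorem~3.4]{preusslerImplicationsOfStructured2026}. The gap is the core step, and the mechanism you sketch for it does not work.

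First, the duality theorem \cite[Theorem~3.2]{aroraAdmissibleOperatorsSundual2026} gives $\Phi'(X^\sun)\subseteq\Ll^1([0,1];U')$ only on the sun-dual, not on $X'$. So $\Phi$ is weak$^*$-to-$\sigma(X,X^\sun)$ continuous, not weak$^*$-to-weak. Already for $U=\C$, weak$^*$-to-weak continuity would make $\Phi$ weakly compact, because $\ball_{\Ll^\infty([0,1])}$ is weak$^*$-compact. That is exactly what has to be proved, so you are assuming the crux. An $\Ll^1$-bound on $X^\sun$ alone cannot give equi-integrability: in the $c_0$ example at the end of the paper, the dual observation operator is $\Ll^1$-admissible on $X^\sun=\ell^1$, yet $\norm{CT(\argument)'e_k}_{\Ll^1([0,\epsilon])}\to1$.

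Second, your contradiction does not materialise. A bounded operator into $X$ can perfectly well fix a copy of $\ell^\infty$. Failure of $\sum_n\epsilon_n\Phi(\one_{E_n}u_n)$ to converge in the norm of $X$ is harmless, since the partial sums are bounded by $\norm{\Phi}$ and converge in $X_{-1}$ to $\Phi v\in X$.

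Third, you cannot simply make the $E_n$ disjoint on the primal side. Trimming $E_n$ needs $\norm{\Phi(\one_E u_n)}_X\to0$ as $\abs{E}\to0$ for each fixed $u_n$, which is not available a priori. Semigroup shifts do not preserve the lower bound uniformly in $n$.

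\paragraph{The missing idea.} What you need is \emph{separability of the range combined with the Grothendieck property} (Lemma~\ref{lem:weak-compactness-input-operator}). Fix $v\in\Ll^\infty([0,1];U)$. By Pettis' theorem, almost all values of $v$ lie in a separable $V\subseteq U$. Then $\Phi(fv)$ lies in the separable, closed, $T$-invariant subspace $Z=\overline{\vecspan}\{T(t)\Res(\lambda,A_{-1})Bz : t\ge0,\ z\in V\}$, because $\Res(\lambda,A)\Phi(fv)\in Z\cap\dom{A}$. Since $\Ll^\infty([0,1])$ is a Grothendieck space, $\Phi_{1,v}\colon f\mapsto\Phi(fv)$ is weakly compact. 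In your language, the copy of $\ell^\infty$ you produce would have to sit inside the separable space $Z$; that is the actual contradiction.

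The paper combines this with the duality theorem, which is where duality is really needed: it guarantees $\Phi_{1,v}'(X^\sun)\subseteq\Ll^1([0,1])$. Gantmacher and Dunford--Pettis then give uniform integrability of $\{\Spr{(\Phi'x^\sun)(\argument)}{v(\argument)}_{U',U} : x^\sun\in\ball_{X^\sun}\}$ for each $v$. Lemma~\ref{lem:uniform-integrability} upgrades this to $U'$-valued uniform integrability; there the disjoint sets are produced on the dual side, for genuine $\Ll^1$-functions, so trimming is free.

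Your primal scheme can also be completed:
\begin{enumerate}
\item Pick $x_n\in(X^\sun)_1$ with $\norm{x_n}\le1$ that nearly norm $\Phi(\one_{E_n}u_n)$.
\item Trim the $E_n$ against the bounded functions $B^\sun T^\sun(1-\argument)x_n$. This yields disjoint $F_k$ with $\norm{\Phi(\one_{F_k}u_{n_k})}\ge\epsilon'>0$.
\item Put $v=\sum_k\one_{F_k}u_{n_k}$. Then $\one_{F_k}\to0$ weakly in $\Ll^\infty([0,1])$, and weakly compact operators on $\Ll^\infty([0,1])$ are completely continuous. Hence $\Phi(\one_{F_k}u_{n_k})=\Phi_{1,v}(\one_{F_k})\to0$, a contradiction.
\end{enumerate}
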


\begin{remarks}
    \label{rem:linfty-to-orlicz}
    (a) Note that the Young function $F$ in Proposition~\ref{prop:linfty_to_ef} depends on the control operator $B$. In particular, it generally cannot be replaced by $F(t)=t^p$ for finite $p$ since $\Ll^\infty$-admissibility need not imply $\Ll^p$-admissibility for any finite $p$ \cite[Example~5.2]{Jacob_2018}.

    (b) An important consequence of Proposition~\ref{prop:linfty_to_ef} is that admissibility of an $\Ll^\infty$-admissible control operator is automatically zero-class, i.e., 
    $
        \lim_{t\downarrow0}\Norm{u\mapsto\int_0^t T_{-1}(t-s) Bu(s) \dd s}_{\LL(\Ll^\infty([0,t];U),X)  }  =0;
    $
    see \cite[Proof of Proposition~5]{jacobContinuitySolutionsParabolic2019}.

    In particular, Proposition~\ref{prop:linfty_to_ef} provides an alternate proof of the implication (1)$\Rightarrow$(3) in \cite[Theorem~2.9]{JacobSchwenningerWintermayr2022}, i.e., if $A_{-1}$ is $\Ll^\infty$-admissible, then $A$ is bounded (cf. \cite[Proposition~1.3]{JacobSchwenningerWintermayr2022}). 

    (c) Under the remaining assumptions
    on the nonlinearity, the local existence and well-posedness results
    in \cite[Theorems~3.7 and~3.17]{Mironchenko2024} apply whenever
    both control operators are $\Ll^\infty$-admissible:~the continuity requirement in
    \cite[Assumption~3.1]{Mironchenko2024} follows from
    $\Ll^\infty$-admissibility of $B$, while the zero-class
    $\Ll^\infty$-admissibility requirement on $B_2$ in
    \cite[Assumption~3.2]{Mironchenko2024} can be replaced by
    $\Ll^\infty$-admissibility as noted in (b) above.
\end{remarks}

An important ingredient of the proof of Proposition~\ref{prop:linfty_to_ef} is the fact that the input operator corresponding to a $\Ll^\infty$-admissible control operator with finite-dimensional input space maps into a separable subspace. We outsource this argument to the following lemma.

\begin{lemma}
    \label{lem:weak-compactness-input-operator}
    Let $X$ and $U$ be Banach spaces,
    let $(T(t))_{t\ge 0}$ be a \czero-semigroup on $X$, and let $B\in \LL(U,X_{-1})$ be a $\Ll^\infty$-admissible control operator with input operator
    \[
        \Phi_1\colon \Ll^\infty([0,1];U) \to X,
        \qquad
        u \mapsto \int_0^1 T_{-1}(1-s) Bu(s) \dd s.
    \]
    Then for each $v\in \Ll^\infty([0,1];U)$, the operator
    \[
        \Phi_{1,v}\colon \Ll^\infty([0,1]) \to X, \qquad f \mapsto \Phi_1(fv)
    \]
     is weakly compact. 
     In particular, it follows that the input operators corresponding to $\Ll^\infty$-admissible input elements $b\in X_{-1}$ are weakly compact.
\end{lemma}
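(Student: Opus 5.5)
The plan is to reduce weak compactness of $\Phi_{1,v}$ to a statement about disjoint sequences. The space $\Ll^\infty([0,1])$ is isometrically a $C(K)$-space, so Pełczyński's property (V) holds for it. Hence a bounded operator $S\colon \Ll^\infty([0,1])\to X$ is weakly compact if and only if $\norm{Sf_n}_X\to0$ for every sequence $(f_n)$ of pairwise disjointly supported functions with $\norm{f_n}_\infty\le1$. Equivalently, the finitely additive measure
\[
    m(E)\coloneqq \Phi_{1,v}(\one_E)=\Phi_1(\one_E v)
\]
must be countably additive in the norm of $X$. By Bartle--Dunford--Schwartz and Orlicz--Pettis, it suffices to show that $m$ is bounded (which is clear from $\Ll^\infty$-admissibility) and \emph{weakly} countably additive. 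The final sentence of the lemma then follows by taking $U=\C$ or $v=\one$, with $b$ in place of $B$.

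Next I use the ambient space $X_{-1}$. The function $g(s)\coloneqq T_{-1}(1-s)Bv(s)$ is strongly measurable and bounded in $X_{-1}$, so it is Bochner integrable there, and $m(E)=\int_E g(s)\dd s$ as an $X_{-1}$-valued integral. A Bochner indefinite integral is norm countably additive in $X_{-1}$. Consequently, for every $x'\in X_{-1}'$ the scalar measure $\langle x',m(\argument)\rangle$ is countably additive. Now $X_{-1}'$ can be identified with $\dom{A'}$ with the graph norm. By the Hille--Yosida bound, $\lambda \Res(\lambda,A')x'\to x'$ weak$^*$ with $\norm{\lambda\Res(\lambda,A')}\le M'$ for large $\lambda$, so $\dom{A'}$ is a norming subspace of $X'$.

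The remaining step, which I expect to be the main obstacle, is to upgrade countable additivity against the norming subspace $\dom{A'}$ to countable additivity against all of $X'$. Boundedness of $m$ in $X$ alone does not suffice in general. I would argue by contradiction. If $m$ is not countably additive in $X$, then there are disjoint sets $E_k$ and $\delta>0$ with $\norm{m(E_k)}\ge\delta$. Since $\sum_k \pm m(E_k)$ is bounded in $X$ (the signs produce functions of sup-norm $1$), the series $\sum_k m(E_k)$ is weakly unconditionally Cauchy. By Bessaga--Pełczyński, after passing to a subsequence, $(m(E_k))$ is equivalent to the unit vector basis of $c_0$, while $m(E_k)\to0$ in $X_{-1}$. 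To contradict this, I would exploit the semigroup structure. Pass to a subsequence for which the sets $E_k$ concentrate near one time $t_0\in[0,1]$. For $E\subset[0,\tau]$ one has $m(E)=T(1-\tau)\Phi_\tau(\one_E v)$, and the tail part over $[\tau,1]$ is governed by the short-time input map. Combining this with the smoothing identity $\int_0^h T_{-1}(r)x\dd r\in X$ for $x\in X_{-1}$, I aim to show that $\norm{m(E_k)}_X$ is controlled by $\norm{m(E_k)}_{X_{-1}}$ up to an error that is small uniformly in $k$. This is a uniform-continuity estimate, and it is where the difficulty lies.

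If this direct route fails, the fallback is to show that $\Phi_{1,v}$ is weak$^*$-to-weak continuous from $\Ll^\infty=(\Ll^1)'$ into $X$. That would make $\Phi_{1,v}$ the adjoint of an operator $X'\to \Ll^1([0,1])$, namely $x'\mapsto \langle x',g(\argument)\rangle$, defined first for $x'\in\dom{A'}$ and then extended by density. Such an adjoint is weakly compact because $\{\langle x',g\rangle : x'\in\dom{A'},\ \norm{x'}\le1\}$ is uniformly integrable in $\Ll^1$. I would prove this uniform integrability from the same disjoint-set estimate.
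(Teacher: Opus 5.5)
\textbf{There is a genuine gap: the step that carries the whole lemma is missing.} Your reduction is sound as far as it goes. On $\Ll^\infty([0,1])$, weak compactness of $\Phi_{1,v}$ amounts to $\norm{m(E_k)}_X\to0$ for disjoint $(E_k)$. Strictly, the general criterion is strong additivity rather than countable additivity, but the two agree here because $m$ is countably additive in $X_{-1}$. You also correctly get that $\Spr{x'}{m(\argument)}$ is countably additive for $x'\in\dom{A'}$. What you never supply is an actual ingredient for passing from the norming subspace $\dom{A'}$ to all of $X'$.

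\emph{Why the routes you propose do not close it.}
\begin{itemize}
\item As you note yourself, boundedness plus countable additivity against a norming subspace is not enough: $E\mapsto\one_E\in\Ll^\infty([0,1])$, with norming subspace $\Ll^1$, is a counterexample.
\item A $c_0$-copy from Bessaga--Pe\l{}czy\'nski gives no contradiction. A sequence equivalent to the $c_0$-basis in $X$ can perfectly well tend to $0$ in $X_{-1}$.
\item The bound $\norm{m(E_k)}_X\le C_\epsilon\norm{m(E_k)}_{X_{-1}}+\epsilon$ is equivalent to the conclusion, since $\norm{m(E_k)}_{X_{-1}}\to0$. The tools you list do not produce it. A general $C_0$-semigroup does not smooth $X_{-1}$ into $X$, and $\int_0^hT_{-1}(r)x\dd r\in X$ holds only with a constant that blows up as $h\downarrow0$. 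So you would need $T(r)m(E_k)\to m(E_k)$ uniformly in $k$, which is no easier.
\item The fallback is circular. Via Gantmacher and Dunford--Pettis, uniform integrability of $\{\Spr{x'}{g}: x'\in\dom{A'},\ \norm{x'}\le1\}$ is equivalent to the weak compactness you want. The paper deduces it \emph{from} this lemma in the proof of Proposition~\ref{prop:linfty_to_ef}.
\end{itemize}

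\textbf{The missing idea is that $\Phi_{1,v}$ has separable range, combined with the Grothendieck property of $\Ll^\infty([0,1])$.}
\begin{itemize}
\item By Pettis' theorem, $v$ takes values a.e.\ in a separable subspace $V\subseteq U$. Hence $Z\coloneqq\overline{\vecspan}\{T(t)\Res(\lambda,A_{-1})Bz: t\ge0,\ z\in V\}$ is a closed, separable, $T$-invariant subspace of $X$.
\item For $f\in\Ll^\infty([0,1])$, the element $\Res(\lambda,A)\Phi_1(fv)=\int_0^1T(1-s)\Res(\lambda,A_{-1})Bf(s)v(s)\dd s$ is a Bochner integral of a $Z$-valued function. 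So it lies in $Z\cap\dom{A}$.
\item Invariance of $Z$ gives $A(Z\cap\dom{A})\subseteq Z$. Hence $\Phi_1(fv)=(\lambda-A)\Res(\lambda,A)\Phi_1(fv)\in Z$.
\item Every bounded operator from a Grothendieck space into a separable space is weakly compact. Bounded sequences in $Z'$ have weak$^*$-convergent subsequences, the adjoint maps these to weak$^*$-convergent and hence weakly convergent sequences, and then Eberlein--\v{S}mulian and Gantmacher finish the argument.
\end{itemize}
This is exactly the paper's proof. If you prefer to stay within your disjoint-sequence framework, Rosenthal's theorem closes the gap equally well. A non-weakly compact operator on $\Ll^\infty([0,1])$ fixes a copy of $\ell^\infty$, and that cannot sit inside the separable space $Z$. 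Note that it is the $\ell^\infty$-copy, not a $c_0$-copy, that contradicts separability.
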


\begin{proof}
    Let $v \in \Ll^\infty([0,1];U)$ be arbitrary.
    Since $v\in \Ll^\infty([0,1];U)$ is strongly measurable, there exists a separable subspace $V\subseteq U$ such that $v(s)\in V$ for almost every $s$.
    This follows from the Pettis measurability theorem; see, e.g., \cite[Theorem II.1.2]{DiestelUhl1977}. 
    
    Fix $\lambda\in \resSet(A)$ and define the set
    \[
        Z \coloneqq \overline{\vecspan} \{ T(t)\Res(\lambda,A_{-1})Bz : t\ge 0, z\in V\};
    \]
    note that $Z$ is separable, closed, and $(T(t))_{t\ge 0}$-invariant. Here, the separability follows from 
    separability of $V$ and an argument using the density of the rationals and the strong continuity of the semigroup \Ttt. Moreover, on setting
    \begin{align*}
        z \coloneqq \Res(\lambda ,A) \Phi_1 v
          = \int_0^1 T(1-s)\Res(\lambda,A_{-1}) B v(s)\dd s,
    \end{align*}
    we see that $z \in Z \cap \dom A$.
    In particular, $z$ and $Az$ are elements of $Z$. This yields $\Phi_1 v =(\lambda-A)z\in Z$.
    Consequently, $\Phi_{1,v}(\Ll^\infty([0,1])) \subseteq Z$.
    Now, $\Ll^\infty([0,1])$ is Grothendieck and $Z$ is separable. Weak compactness of $\Phi_{1,v}\colon \Ll^\infty([0,1]) \to Z$ follows by  \cite[Corollary~VI.2.12]{DiestelUhl1977}. Thus, $\Phi_{1,v}\colon \Ll^\infty([0,1]) \to X$ is also weakly compact.
    Finally, if $b\in X_{-1}$ is $\Ll^\infty$-admissible, then taking $U= \C$ 
    and $v\equiv \one$ above, the corresponding input operator is 
    $\Phi_{1,\one}$, hence weakly compact.
\end{proof}

In the sequel, we use the notation $\ball_X$ for the closed unit ball of a Banach space $X$. Moreover, we use $\abs{E}$ for the Lebesgue measure of a set $E$.

The proof of Proposition~\ref{prop:linfty_to_ef} uses the following criterion for uniform integrability. Recall that a set $M\subseteq \Ll^1([0,1];U)$, where $U$ is a Banach space, is called \emph{uniformly integrable} if for each sequence of measurable subsets $(E_n)_{n\in \N}$ of $[0,1]$ satisfying $\Abs{E_n}\to 0$, one has
\[
    \lim_{n\to \infty}\sup_{f\in M} \int_{E_n} \norm{f(s)}_U\dd s = 0.
\]
For uniform integrability and its relation to weak compactness,
see \cite[Section~IV.2]{DiestelUhl1977}.

\begin{lemma}
    \label{lem:uniform-integrability}
    Let $U$ be a Banach space and let $M\subseteq \Ll^1([0,1];U')$ be norm bounded. If for each $v\in \Ll^\infty([0,1];U)$, the set
    \[
        M_v \coloneqq \{ \Spr{f(\argument)}{v(\argument)}_{U',U} : f\in M\} \subset \Ll^1([0,1])
    \]
    is uniformly integrable, then $M$ is a uniformly integrable subset of $\Ll^1([0,1];U')$.
\end{lemma}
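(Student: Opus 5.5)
The plan is to argue by contradiction. I will glue near-norming test functions along a sparse, disjointified subsequence of ``bad'' sets into a single $v\in \Ll^\infty([0,1];U)$, and then contradict uniform integrability of $M_v$. Suppose $M$ is not uniformly integrable. Then there are $\epsilon>0$, measurable sets $E_n\subseteq[0,1]$ with $\abs{E_n}\to0$, and $f_n\in M$ such that $\int_{E_n}\norm{f_n(s)}_{U'}\dd s\ge\epsilon$ for all $n$.

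\emph{Step 1 (near-norming test functions).} For each $n$ I construct a strongly measurable, in fact simple, $v_n\colon[0,1]\to U$ with $\norm{v_n(s)}_U\le1$ such that
\[
    \int_0^1\bigl(\norm{f_n(s)}_{U'}-\operatorname{Re}\Spr{f_n(s)}{v_n(s)}_{U',U}\bigr)\dd s\le\epsilon/8 .
\]
To do this, approximate the strongly measurable $f_n$ in $\Ll^1([0,1];U')$ by a simple function $g_n=\sum_i \one_{A_i}y_i$ with $\norm{f_n-g_n}_{\Ll^1}\le\epsilon/32$. For each value $y_i\in U'$ pick $z_i\in\ball_U$ with $\operatorname{Re}\Spr{y_i}{z_i}\ge\norm{y_i}-\epsilon/32$, and set $v_n=\sum_i\one_{A_i}z_i$. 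The triangle inequality, applied to both $\norm{f_n}-\norm{g_n}$ and $\Spr{f_n-g_n}{v_n}$, then gives the bound above.

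\emph{Step 2 (sparse subsequence and disjointification).} Next I choose indices $n_1<n_2<\dots$ inductively. Given $n_1,\dots,n_k$, absolute continuity of the finitely many integrable functions $\norm{f_{n_j}}$, $j\le k$, yields $\delta_k>0$ with the following property: $\abs{E}\le\delta_k$ implies $\int_E\norm{f_{n_j}}\le\epsilon/8$ for all $j\le k$. I then pick $n_{k+1}$ with $\abs{E_{n_{k+1}}}\le 2^{-k-1}\min_{j\le k}\delta_j$. Consequently $\abs{\bigcup_{i>k}E_{n_i}}\le\delta_k$. Define the pairwise disjoint sets
\[
    F_k\coloneqq E_{n_k}\setminus\bigcup_{i>k}E_{n_i}.
\]
They satisfy $\abs{F_k}\to0$ and $\int_{F_k}\norm{f_{n_k}}\ge\epsilon-\epsilon/8$.

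\emph{Step 3 (gluing and contradiction).} Set $v\coloneqq\sum_k\one_{F_k}v_{n_k}$. Since the $F_k$ are disjoint, $v$ is strongly measurable with $\norm{v}_\infty\le1$, so $v\in\Ll^\infty([0,1];U)$. On $F_k$ we have $v=v_{n_k}$. The integrand in Step~1 is nonnegative, so its integral over $F_k$ is at most $\epsilon/8$, and hence
\[
    \Abs{\int_{F_k}\Spr{f_{n_k}(s)}{v(s)}\dd s}\ge\int_{F_k}\norm{f_{n_k}}\dd s-\frac{\epsilon}{8}\ge\frac{3\epsilon}{4}
\]
for every $k$. Because $\abs{F_k}\to0$, this contradicts the uniform integrability of $M_v$. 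Therefore $M$ is uniformly integrable. Norm boundedness of $M$ is not really needed in this argument; it only matters when one relates uniform integrability to weak compactness.

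The main obstacle is Step~2. A single $v$ must serve infinitely many $f_{n_k}$ at once, and the gluing only works if the sets are disjoint without destroying most of the mass of each $\norm{f_{n_k}}$ on $E_{n_k}$. The inductive choice of ever smaller sets, governed by the absolute-continuity moduli of the finitely many previously chosen functions, is what makes this work. Step~1 is routine, but it must use simple-function approximation because $U'$-valued functions need not be normed pointwise by a measurable selection.
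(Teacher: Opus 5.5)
Your proof is correct and follows the same strategy as the paper: argue by contradiction, pass to pairwise disjoint bad sets, and glue near-norming test functions into a single $v\in\Ll^\infty([0,1];U)$ that violates uniform integrability of $M_v$. The differences are minor. You disjointify by hand via absolute continuity, which (as you note) makes norm boundedness superfluous, whereas the paper uses boundedness to invoke the Diestel--Uhl disjointness criterion. You also obtain the $v_n$ from simple-function approximation rather than from the isometric embedding $\Ll^1([0,1];U')\hookrightarrow\Cc([0,1];U)'$.
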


\begin{proof}
    We argue the contrapositive.
    Suppose $M$ is not uniformly integrable; then because $M$ is bounded, there exists $\epsilon>0$, a sequence $(f_n)_{n\in\N}\subseteq M$ and pairwise disjoint measurable sets $E_n\subseteq [0,1]$ such that $\int_{E_n} \norm{f_n(s)}_{U'}\dd s\ge \epsilon$ for all $n\in\N$; see \cite[proof of Theorem~IV.2.4]{DiestelUhl1977}.

    Due to isometry of the canonical embedding $\Ll^1([0,1];U')$ into $\Cc([0,1];U)'$, the function $h_n \coloneq \one_{E_n} f_n \in \Ll^1([0,1];U')$ satisfies
    \[
        \norm{h_n}_{\Ll^1([0,1];U')} = \sup \left\{  \Abs{\int_0^1 \Spr{h_n(s)}{v(s)}_{U',U}\dd s } : v\in  \ball_{\Cc([0,1];U)}  \right\}.
    \]
    In particular,
    \[
         \sup \left\{  \Abs{\int_{E_n}\Spr{h_n(s)}{v(s)}_{U',U}\dd s } : v\in  \ball_{\Cc([0,1];U)} \right\} = \int_{E_n} \norm{f_n(s)}\dd s \ge \epsilon.
    \]
    We can thus choose a sequence $(v_n)_{n\in \N} \subseteq \Cc([0,1];U)$ such that $\norm{v_n}_{\Cc([0,1];U)} \le 1$ and \[\Abs{\int_{E_n}\Spr{h_n(s)}{v_n(s)}_{U',U}\dd s }\ge \epsilon/2\] hold for every $n\in \N$.
    The fact that the sets $E_n$ are mutually disjoint ensures that $v\coloneq \sum_n \one_{E_n} v_n$ lies in $\Ll^\infty([0,1];U)$.
    Since $h_n=f_n$ and $v=v_n$ on $E_n$,
    \[
        \int_{E_n}\Abs{\Spr{f_n(s)}{v(s)}_{U',U}}\dd s
        \geq
        \Abs{\int_{E_n}\Spr{h_n(s)}{v_n(s)}_{U',U}\dd s}
        \geq\frac{\epsilon}{2}.
    \]
    Disjointness gives $\sum_n\abs{E_n}\leq1$, hence
    $\abs{E_n}\to0$. Thus, $M_v$ is not uniformly integrable.
\end{proof}

Finally, the proof of Proposition~\ref{prop:linfty_to_ef} uses the sun dual theory of \czero-semigroups. Let $(T(t))_{t\ge 0}$ be a \czero-semigroup on a Banach space $X$. The associated sun-dual space is the subspace $X^\sun$ of $X'$ where the dual semigroup $(T(t)')_{t\ge 0}$ is strongly continuous. The restricted \czero-semigroup is denoted by $(T^\sun(t))_{t\ge 0}$. For a comprehensive overview of the theory, see \cite{vanNeerven1992}.
It is well-known that the canonical identification between $(X^\sun)_1$ and $(X_{-1})^\sun$ is an isomorphism. In particular, a control operator $B\in \LL(U, X_{-1})$ naturally gives rise to an observation operator $B'\restrict{(X^\sun)_1}\colon (X^\sun)_1 \to U'$.
 The connections between admissibility and sun-duality were recently studied in \cite{aroraAdmissibleOperatorsSundual2026}.

\begin{proof}[Proof of Proposition~\ref{prop:linfty_to_ef}]
    We show $\Ee_F$-admissibility using the uniform integrability approach from \cite{preusslerImplicationsOfStructured2026}. Let $\Phi_1\colon u \mapsto \int_0^1 T_{-1}(1-s) Bu(s) \dd s$ denote the
    input operator corresponding to $B$, which is bounded $\Ll^\infty([0,1];U) \to X$ by assumption.

    Since $B$ is $\Ll^\infty$-admissible for $A$, we know from \cite[Theorem~3.2]{aroraAdmissibleOperatorsSundual2026} that $B^\sun \coloneqq B'\restrict{(X^\sun)_1}$ is an $\Ll^1$-admissible observation operator for the sun dual semigroup $(T^\sun(t))_{t\ge 0}$ on $X^\sun$. 
    We denote the extension of the corresponding output operator $\Psi_1 \colon (X^\sun)_1 \to \Ll^1([0,1]; U')$, $x^\sun \mapsto B^\sun T^\sun (\argument) x^\sun$ to $X^\sun$ also by $\Psi_1$. 
    Consider the reflection operator $\mirrorR_1\in \LL(\Ll^1([0,1];U'))$ given by $f \mapsto f(1-\argument)$. 
    For each $x^\sun \in (X^\sun)_1$ and $u\in \Ll^\infty([0,1],U)$, we have
    \begin{align*}
        \Spr{x^\sun}{\Phi_{1}u} & = \int_{0}^{1} \Spr{x^\sun}{T_{-1}(1-s)Bu(s)  }_{(X_{-1})',X_{-1}}\dd s
                                     = \int_{0}^1 \Spr{B^\sun (T^{\sun})_1(1-s)  x^\sun}{u(s)  }_{U',U}\dd s\\
                                    & = \Spr{ \mirrorR_1\Psi_1 x^\sun }{u}_{ \Ll^\infty([0,1],U)', \Ll^\infty([0,1],U)};
    \end{align*}
     where the last equality uses that $\Ll^1([0,1],U')$ sits inside the dual of $\Ll^\infty([0,1],U)$. The pairing with the extrapolation-space integral is
    legitimate because $x^\sun\in(X^\sun)_1$ represents a
    bounded functional on $X_{-1}$.
     By density of $(X^\sun)_1$ in $X^\sun$, we obtain that
     $
        \Phi_1' = \mirrorR_1\Psi_1
     $
     on $X^\sun$.

    Next, fix $v\in \Ll^\infty([0,1];U)$. The operator $\Phi_{1,v} \colon \Ll^\infty([0,1]) \to X$ defined by $f \mapsto \Phi_1(fv)$
    is weakly compact by Lemma~\ref{lem:weak-compactness-input-operator}. By Gantmacher’s theorem \cite[Theorem~3.5.13]{Megginson1998}, the dual operator $\Phi_{1,v}'$ is also weakly compact. In particular, the restriction
    \[
        S^v \coloneqq \Phi_{1,v}'\restrict{X^\sun} \colon X^\sun \to \Ll^\infty([0,1])'
    \]
    is weakly compact, being the composition of
    $\Phi_{1,v}'$ with the bounded inclusion $X^\sun\hookrightarrow X'$. Since for each $f\in \Ll^\infty([0,1])$ it holds that
    \begin{align*}
            \Spr{S^v x^\sun}{f}_{\Ll^\infty([0,1])',\Ll^\infty([0,1])} & = \Spr{x^\sun  }{\Phi_{1,v}f}_{X',X}
                                         = \Spr{x^\sun}{\Phi_1(fv)  }_{X',X} \\
                                         &= \Spr{\Phi_1'x^\sun  }{fv}_{\Ll^\infty([0,1];U)',\Ll^\infty([0,1];U)} \\
                                        &= \int_0^1  \Spr{(\Phi_1'x^\sun)(s)}{f(s)v(s) }_{U',U}\dd s \\
                                         &= \int_0^1 f(s) \Spr{(\Phi_1'x^\sun)(s)}{v(s)}_{U',U}\dd s,
    \end{align*}
    and $\Spr{(\Phi_1'x^\sun)(\argument)}{v(\argument)}_{U',U}\in \Ll^1([0,1])$,
    it follows that $S^v x^\sun = \Spr{(\Phi_1'x^\sun)(\argument)}{v(\argument)}_{U',U}$ under the canonical embedding $\Ll^1([0,1])\hookrightarrow \Ll^\infty([0,1])'$.
    In particular,
    $S^v(X^\sun)\subseteq \Ll^1([0,1])$.

    Now, together, weak compactness of $S^v$ and the
    classical Dunford--Pettis theorem \cite[Theorem~III.2.15]{DiestelUhl1977} yield that the set $S^v(\ball_{X^\sun})\subseteq \Ll^1([0,1])$ is uniformly integrable.
    
    Consider the bounded set $M\coloneqq \Phi_1'(\ball_{X^\sun}) \subseteq \Ll^1([0,1];U')$ and set
    \[
        M_v \coloneqq \{ \Spr{f(\argument)}{v(\argument)}_{U',U}: f\in M\}.
    \]
    Then $M_v = S^v(\ball_{X^\sun})$ is uniformly integrable for all $v\in \Ll^\infty([0,1];U)$. Hence $M$ is uniformly integrable by Lemma~\ref{lem:uniform-integrability}. Since $M = (\mirrorR_1 \Psi_1)(\ball_{X^\sun})$ and reflection is measure preserving, $\Psi_1(\ball_{X^\sun})$ is uniformly integrable. Consequently, the assertion holds due to \cite[Theorem~3.4]{preusslerImplicationsOfStructured2026}.
\end{proof}

\section{Proofs of the main results}
    \label{sec:proofs}

In this section, we give the proofs of the results stated in Section~\ref{sec:main-results}.

\begin{proof}[Proof of Theorem~\ref{thm:main}]
    The implication
    \ref{thm:main:itm:infty-admissibility}$\Rightarrow$\ref{thm:main:itm:orlicz-admissibility}
    follows from Proposition~\ref{prop:linfty_to_ef}.
    Furthermore, the implications
    \ref{thm:main:itm:iiss}$\Rightarrow$\ref{thm:main:itm:iss}
    $\Rightarrow$\ref{thm:main:itm:infty-admissibility},
    as well as
    \ref{thm:main:itm:orlicz-admissibility}$\Rightarrow$\ref{thm:main:itm:iiss}
    under exponential stability of \Ttt{},
    follow from \cite[Proposition~2.10 and Theorem~3.1]{Jacob_2018}.
\end{proof}

\begin{proof}[Proof of Corollary~\ref{cor:weiss}]
    Since $\Ll^\infty$ admissibility of the control operator implies $\Ee_F$-admissibility for a Young function $F$ (Theorem~\ref{thm:main}), the assertion  follows from \cite[Proposition~5]{jacobContinuitySolutionsParabolic2019}.
    See also the discussion in \cite{hosfeldInputtostateStabilityClasses2025}.
\end{proof}

\begin{proof}[Proof of Corollary~\ref{cor:perturbation}]
    As noted in Remark~\ref{rem:linfty-to-orlicz}(b), $\Ll^\infty$-admissibility of $B\in\LL(X,X_{-1})$ is automatically zero-class. The assertion now follows from \cite[Corollary~III.3.3]{engel_one-parameter_2000}.
\end{proof}

\begin{proof}[Proof of Corollary~\ref{cor:bilinear}]
    By Theorem~\ref{thm:main}, the linear system $(A,B)$
    is $\Ll^\infty$-integral ISS. In particular, the semigroup
    generated by $A$ is exponentially stable, so $(A,0)$
    is also $\Ll^\infty$-integral ISS. The assertion follows from
    \cite[Theorem~2.9]{HosfeldJacobSchwenninger2022},
    applied with $F(x,u)=ux$, $B_1=B$, and $B_2=0$.
\end{proof}

\section{Counterexamples}

In analogy to admissibility with respect to $\Ll^\infty$ and $\Ee_F$, one can define the notion of (zero-class) $\mathrm C$-admissibility.
As a capstone to the paper, we provide two counterexamples, establishing 
\[
    \mathrm{C}\text{-admissibility} \not\Rightarrow \text{zero-class }\mathrm{C}\text{-admissibility} \not\Rightarrow \Ee_F\text{-admissibility}.
\]
Both examples use the scalar input space $U = \C$. Compared to the Kato example \cite[Example~2.3]{JacobSchwenningerWintermayr2022} where $\dim U = \infty$ is used to show the first non-implication above, the first example therefore provides a simplified setup. The second example is built on the Cantor function and the properties of its derivative and, to the authors' knowledge, provides the first example of 
a zero-class $\Cc$-admissible control operator that is not already $\Ee_F$-admissible for any Young function.

Combined, the constructions
show that in the realm of $\Cc$-admissibility, all three notions (admissibility, zero-class admissibility, and $\Ee_F$-admissibility for some $F$) are
distinct. With the results in Section \ref{sec:admissibility} of this paper, we see that the situation for $\Ll^\infty$-admissibility stands in great contrast to this,
since in that setting, the three admissibility properties coincide. This is a result of the additional weak compactness properties that follow from the 
Grothendieck structure of $\Ll^\infty([0,1])$. Note that the apparent detour through the ``scalarized world'' of Lemma \ref{lem:uniform-integrability} is necessary for
our approach since $\Ll^\infty([0,1];U)$ is itself not necessarily a Grothendieck space for $U$ arbitrary, see, e.g., \cite[Proposition~5.4.5]{gonzalez_grothendieck_2021}.

\begin{example}[$\mathrm{C}$-admissibility does not imply zero-class $\mathrm{C}$-admissibility, even if $U = \C$]
On $X = c_0$, consider the semigroup $(T(t))_{t\ge 0}$ generated by $A = \operatorname{diag}(-n + \ii n^2)$ and consider the input element $b = (n)_{n\in\N} \in X_{-1}$.
The dual system is $X^\sun = X' = \ell^1$ and $A' = \operatorname{diag}(-n + \ii n^2)$ with the observation operator  $C : (X')_1\to \C$ given by $z \mapsto \sum_{n=1}^\infty n z_n$. 

Applying \cite[Theorem~3.1]{aroraAdmissibleOperatorsSundual2026}, $b$ is $\mathrm{C}$-admissible if and only if $C$ is $\Ll^1$-admissible; suitably, the same equivalence also applies to the zero-class variants. For $z\in\ell^1$ and $t>0$, we have $T(t)'z\in \dom{A'}$,
since $\sup_{n\in\N}|-n+\ii n^2|\e^{-nt}<\infty$.
Moreover, Tonelli's theorem gives
\[
    \int_0^1\sum_{n=1}^\infty ne^{-nt}|z_n|\dd t
    =\sum_{n=1}^\infty|z_n|(1-e^{-n})
    \leq\norm{z}_{\ell^1}.
\]
Consequently, the output is integrable and the following
estimate is justified:
\begin{align*}
    \norm{CT(\argument)'z}_{\Ll^1([0,1])} &= \int_0^1 \biggl|{\sum_{n=1}^\infty}n \e^{-nt}\e^{\ii n^2 t}z_n\biggr| \dd t
    \le \sum_{n=1}^\infty \abs{z_n} \int_0^1 \abs{n \e^{-nt}\e^{\ii n^2 t}} \dd t\\
    &= \sum_{n=1}^\infty \abs{z_n} \int_0^1 n\e^{-nt} \dd t 
    = \sum_{n=1}^\infty \abs{z_n} (1 - \e^{-n}) \leq \norm{z}_{\ell^1};
\end{align*}
 see also
\cite[Example~4.3]{preusslerImplicationsOfStructured2026}.
It follows that $C$ is $\Ll^1$-admissible. Setting $z = e_k$, the $k$\textsuperscript{th} unit
vector in $\ell^1$, we see that for each $\epsilon >0$
\[
    \norm{CT(\argument)'e_k}_{\Ll^1([0,\epsilon])}  = \int_0^\epsilon k \e^{-kt}\dd t = 1-\e^{-k\epsilon} \to 1
\]
as $k \to \infty$. Thus the norm of the observation map
$z\mapsto CT(\argument)'z$ from $\ell^1$ to
$\Ll^1([0,\epsilon])$ is at least $1$ for every $\epsilon>0$.
Whence $C$, and in turn $b$, is admissible but not zero-class admissible.
\end{example}

\begin{example}[Zero-class $\mathrm{C}$-admissibility does not imply $\Ee_F$-admissibility, even if $U = \C$]
    Let $(L(t))_{t\ge 0}$ be the nilpotent left translation semigroup with generator $A$ on
    \[
        X \coloneqq \{ f\in \Cc[0,1]: f(1)=0\}.
    \]
    Let $\mathfrak c\colon[0,1]\to[0,1]$ denote the Cantor
    function and let $\mu$ be the associated Cantor measure.
    The function $\mathfrak c$ is continuous and nondecreasing,
    with $\mathfrak c(1)=1$, and $\mu$ is singular with respect
    to Lebesgue measure; see
    \cite[Examples~1.67 and~3.34]{ambrosioFunctionsBoundedVariation2000}.
    Consequently, $(\mathfrak c-1)\in X\cap\operatorname{BV}([0,1])$.
    Since $\mathfrak c$ is the distribution function of $\mu$,
    $\partial(\mathfrak c-1)=\mu$
    in $\mathcal D'((0,1))$;
    see \cite[Example~1.75]{ambrosioFunctionsBoundedVariation2000}.
    
    Therefore, $b \coloneqq A_{-1}(\mathfrak c-1) =\mu$ in $X_{-1}$; see \cite[Example~5.1]{BatkaiJacobVoigtWintermayr2018}. The same example identifies the positive extrapolation cone
    with the positive atomless finite measures on $(0,1)$.
    Since the Cantor measure is positive and atomless, $b$ is a positive operator when viewed as an element of $\mathcal L(\C, X_{-1})$. Consequently, $b$ is zero-class $\mathrm{C}$-admissible by \cite[Corollary~4.7]{AroraGlueckPaunonenSchwenninger2024}. 

    Let $F$ be a Young function and assume towards a contradiction that $b$ is $\Ee_F$-admissible. Let $\Phi_1$ denote the corresponding input operator and $\mirrorR_1$ be the reflection operator $f \mapsto f(1-\argument)$. Since $\mirrorR_1$ is an isometry on $\Ee_F([0,1])$ and $\delta_0\in X'$, we obtain that
    \[
        \psi \coloneqq \delta_0 \Phi_1 \mirrorR_1 \in \Ee_F([0,1])' \simeq \Ll_{\widetilde F}([0,1]) \subseteq \Ll^1([0,1]);
    \]
    where $\widetilde F$ denotes the complementary Young function corresponding to $F$. For the duality $\Ee_F([0,1])' \simeq \Ll_{\widetilde F}([0,1])$ used above, see, e.g., \cite[Theorem~4.13.6 and Remark~4.13.8]{pickFunctionSpaces2013}. 
    In particular, there exists $g\in \Ll^1([0,1])$ such that $\Spr{\psi}{u}=\int_0^1 ug\dd s$ for all $u\in \Ee_F([0,1])$.

    On the other hand, for $\varphi\in\mathrm C^\infty_{\mathrm c}((0,1))$, integration by parts gives
    \begin{align*}
        \Phi_1 \mirrorR_1 \varphi & = \int_0^1 L_{-1}(s)A_{-1}[\mathfrak c-1]\varphi(s)\dd s
                                   = \int_0^1 \frac{\dd}{\dd s}\big( L_{-1}(s)[\mathfrak c-1]\big) \varphi(s)\dd s\\
                                  & = - \int_0^1 L(s)[\mathfrak c-1]\varphi'(s)\dd s,
    \end{align*}        
    where the last integral is an $X$-valued Bochner integral.
    In particular, evaluating the above expression at zero, we obtain, for every
    $\varphi\in\Cc^\infty_{\mathrm c}((0,1))$,
    \begin{align*}
        \psi(\varphi) & = \left[-\int_0^1 L(s)[\mathfrak c-1]\varphi'(s)\dd s  \right]\!(0)
              = -\int_0^1 \big(L(s)[\mathfrak c-1]\big)(0) \varphi'(s)\dd s\\
             & = -\int_0^1 [\mathfrak c-1](s)\varphi'(s)\dd s
              = \int_0^1 \varphi(s)\dd \mu(s).
    \end{align*}
    On the other hand, $\Spr{\psi}{\varphi}=\int_0^1\varphi(s)g(s)\dd s$.
    Since $\varphi$ was arbitrary, $\mu$ agrees with $g(s)\dd s$ as a measure
    on $(0,1)$. This is impossible: $\mu((0,1))=1$, while $\mu$ is singular
    with respect to Lebesgue measure. Thus $b$ is not $\Ee_F$-admissible for
    any Young function $F$.
\end{example}

\printbibliography

\end{document}